\theoremstyle{plain}
\newtheorem{lemma}{Lemma}
\newtheorem{definition}{Definition}
\newtheorem{theorem}{Theorem}
\newtheoremstyle{derp}
{3pt}
{3pt}
{}
{}
{\upshape}
{:}
{.5em}
{}
\theoremstyle{derp}
\newcommand{\R}{\mathbb{R}}
\newcommand{\Z}{\mathbb{Z}}
\newcommand{\varx}{\mathbf{x}}
\newcommand{\vary}{\mathbf{y}}
\newcommand{\bol}[1]{\mathbf{#1}}
\title{A note on directional closing}
\author{
Ville Salo \\
vosalo@utu.fi
}
\begin{document}
\maketitle

\begin{abstract}
We show that directional closing in the sense of Guillon-Kari-Zinoviadis and Franks-Kra is not closed under conjugacy. This implies that being polygonal in the sense of Franks-Kra is not closed under conjugacy.
\end{abstract}

\section{Introduction}

Say a set $S \subset \R^2$ \emph{codes} a set $T \subset \R^2$ for a subshift $X \subset A^{\Z^2}$ if
\[ x, y \in X \wedge x|_{S \cap \Z^2} = y|_{S \cap \Z^2} \implies x|_{T \cap \Z^2} = y|_{T \cap \Z^2}. \]
A set $S$ is \emph{expansive} if it codes $\R^2$.

The following definitions of directional closing are due to Guillon, Kari and Zinoviadis \cite{GuKaZi15} and John Franks and Bryna Kra \cite{FrKr19} (with slightly different terminology).

\begin{definition}
Let $H = \{(a,b) \in \R^2 \;|\; b < 0 \vee (b = 0 \wedge a < 0) \}$. A subshift $X \subset A^{\Z^2}$ is \emph{right-closing in direction $\bol v$}, where $\bol v \in S^1$ is a direction represented by a point on the unit sphere, if $R(H)$ codes $\overline{R(H)}$, where $R$ is the linear rotation that takes $(0,1)$ to $\bol v$. Left-closing is defined symmetrically, and a subshift is \emph{closing} in direction $\bol v$ if it is either left- or right-closing in that direction, and \emph{bi-closing} if it is both.
\end{definition}

Note that with this definition a subshift is closing in an irrational direction $\bol v$ if and only if it is \emph{deterministic} in that direction, meaning the half-plane $R(H)$ is expansive. For rational directions, bi-closing does not imply determinism. For the spacetime subshift of a surjective CA, being directionally left-, right- or bi-closing in the opposite direction of where time flows agrees with the standard definition for cellular automata \cite{Ku09}.

We mention the following lemma (not original to this note) which is one motivation for this definition. We say $T \subset \Z^2$ is an \emph{extremally permutive} shape\footnote{For any two corners $\bol u, \bol v$, the contents of $T \setminus \{\bol u, \bol v\}$ determine a partial permutation between possible values at $\bol u$ and $\bol v$.} for a subshift $X \subset A^{\Z^2}$ if $T \setminus \{\bol v\}$ codes $T$ in $X$ whenever $\bol v$ is an extremal point of the convex hull of $T$ in $\R^2$.

\begin{lemma}
\label{lem:AllBiclosing}
If there exists an extremally permutive shape for a subshift $X$, then all directions of $X$ are bi-closing.
\end{lemma}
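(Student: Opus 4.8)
The plan is to verify the definition directly for an arbitrary direction $\bol v \in S^1$, treating right-closing in detail; left-closing is entirely symmetric. Fix the rotation $R$ with $R(0,1)=\bol v$ and put $\bol v' = R(1,0)$, so that $\{\bol v, \bol v'\}$ is an orthonormal basis; then $R(H) = \{\bol x : \langle \bol x, \bol v\rangle < 0\} \cup \{t\bol v' : t < 0\}$ and $\overline{R(H)} = \{\bol x : \langle \bol x, \bol v\rangle \le 0\}$. So right-closing in direction $\bol v$ means exactly that whenever $x, y \in X$ agree on $R(H) \cap \Z^2$ they agree on the line $L = \{\bol x : \langle\bol x,\bol v\rangle = 0\}$; and the lattice points of $L$ not already in $R(H)$ are precisely $\{\bol p \in L \cap \Z^2 : \langle \bol p, \bol v'\rangle \ge 0\}$, which we enumerate as $\bol p_0 = \bol{0}, \bol p_1, \bol p_2, \dots$ in order of increasing $\langle \cdot, \bol v'\rangle$ (the list is a single point if $\bol v'$ is irrational).

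Let $T$ be an extremally permutive shape for $X$. Among the points of $T$ maximizing $\langle\cdot,\bol v\rangle$, let $\bol t_0$ be the one maximizing $\langle\cdot,\bol v'\rangle$; a direct two-coordinate computation shows $\bol t_0 \notin \mathrm{conv}(T\setminus\{\bol t_0\})$, so $\bol t_0$ is an extremal point of $\mathrm{conv}(T)$. Moreover every $\bol t \in T\setminus\{\bol t_0\}$ is of one of two types: (a) $\langle\bol t,\bol v\rangle < \langle\bol t_0,\bol v\rangle$; or (b) $\langle\bol t,\bol v\rangle = \langle\bol t_0,\bol v\rangle$ and $\langle\bol t,\bol v'\rangle < \langle\bol t_0,\bol v'\rangle$. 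By shift-invariance of $X$ and the hypothesis, for every $\bol w \in \Z^2$ the set $(\bol w + T)\setminus\{\bol w + \bol t_0\}$ codes $\bol w + T$, since $\bol w + \bol t_0$ is an extremal point of $\mathrm{conv}(\bol w + T)$; in particular the values of a point of $X$ on $(\bol w + T)\setminus\{\bol w + \bol t_0\}$ determine its value at $\bol w + \bol t_0$.

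Now take $x, y \in X$ agreeing on $R(H)\cap\Z^2$ and argue by induction on $m$ that they agree at $\bol p_m$. Put $\bol w = \bol p_m - \bol t_0$. For $\bol t$ of type (a), $\langle \bol w + \bol t, \bol v\rangle = \langle\bol p_m,\bol v\rangle + \langle\bol t,\bol v\rangle - \langle\bol t_0,\bol v\rangle < 0$, so $\bol w + \bol t \in R(H)$ and $x,y$ agree there by hypothesis. For $\bol t$ of type (b), $\langle\bol w+\bol t,\bol v\rangle = 0$ and $\langle\bol w + \bol t,\bol v'\rangle < \langle\bol p_m,\bol v'\rangle$, so $\bol w + \bol t$ is a lattice point of $L$ that either lies in $R(H)$ or equals some $\bol p_j$ with $j < m$; in the first case $x,y$ agree there by hypothesis, in the second by the inductive hypothesis. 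Hence $x$ and $y$ agree on all of $(\bol w + T)\setminus\{\bol w + \bol t_0\}$, and therefore at $\bol w + \bol t_0 = \bol p_m$. This completes the induction, so $x$ and $y$ agree on $L\cap\Z^2$ and hence on $\overline{R(H)}\cap\Z^2$: $X$ is right-closing in direction $\bol v$. Left-closing follows symmetrically, using the point of $T$ minimizing $\langle\cdot,\bol v\rangle$ and then $\langle\cdot,\bol v'\rangle$ and enumerating $L$ in the opposite order. As $\bol v$ is arbitrary, $X$ is bi-closing in every direction.

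The one place that needs care is the interaction between the choice of corner and the orientation of $L$: the corner must be the lexicographically extreme point of $T$ rather than an arbitrary $\langle\cdot,\bol v\rangle$-maximizer, so that (i) it is genuinely an extremal point of $\mathrm{conv}(T)$ and the extremal-permutivity hypothesis applies to each translate, and (ii) the type-(b) points of a translate lie strictly earlier along $L$, making the induction along $L$ well-founded with base point the origin. Everything else is routine bookkeeping.
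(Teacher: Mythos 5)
The paper does not actually prove this lemma: it states it as known and points to \cite{GuKaZi15,FrKr19}, so there is no in-paper argument to compare yours against. Your proof is the standard one and, for the right-closing case, it is correct and complete: taking $\bol t_0$ to be the lexicographic maximum of $T$ with respect to the ordered basis $(\bol v, \bol v')$ makes it an extremal point of $\mathrm{conv}(T)$, every other point of the translate $\bol p_m - \bol t_0 + T$ lands either in the open half-plane $\langle\cdot,\bol v\rangle<0$, on the negative ray of $L$, or at an earlier $\bol p_j$, and the induction along the discrete set $\{\bol p_m\}$ is well-founded with base point $\bol 0$ (for which the type-(b) points fall on the negative ray, so no inductive hypothesis is needed). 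Your identification of $R(H)$ and $\overline{R(H)}\setminus R(H)$ in terms of $\langle\cdot,\bol v\rangle$ and $\langle\cdot,\bol v'\rangle$ is also right.

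One correction to the final remark: for the left-closing case the coding set is still the \emph{lower} open half-plane, only the included boundary ray flips to $\{t\bol v' : t>0\}$. So the corner must still \emph{maximize} $\langle\cdot,\bol v\rangle$ (otherwise the rest of the translated shape sits in the unknown upper half-plane and the argument collapses); what changes is only the tie-break, which should now \emph{minimize} $\langle\cdot,\bol v'\rangle$ among the $\langle\cdot,\bol v\rangle$-maximizers, with the induction running in decreasing $\langle\cdot,\bol v'\rangle$ along the non-positive ray. Your stated choice, the point minimizing $\langle\cdot,\bol v\rangle$, is the wrong corner. This is a local slip in a sentence you explicitly deferred to symmetry, and it is immediately repairable (e.g.\ by applying your right-closing argument to the shape reflected across $L$), but as written the ``symmetric'' case does not go through. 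A last pedantic point: the statement is vacuous or degenerate when $T$ is empty or a singleton, so you may as well note $|T|\ge 1$ and dispose of the singleton case (where $\emptyset$ codes a point, forcing $|X|\le 1$) separately.
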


The converse is also true \cite{GuKaZi15,FrKr19}.

Such shapes appear frequently in multidimensional symbolic dynamics and the theory of cellular automata, for example in algebraic symbolic dynamics \cite{Sc95}, in the context of Nivat's conjecture \cite{CyKr15a,KaSz15}, and in the spacetime subshifts of bipermutive CA. The tiling problem stays undecidable for SFTs admitting $\{(0,0), (1,0), (0,1), (1,1)\}$ as a corner-deterministic shape \cite{Lu09a}.

In this note, we show that directional closing is (not surprisingly) not conjugacy-invariant. \emph{Left} means $(-1,0)$.

\begin{theorem}
\label{thm:NondeterministicsNonclosing}
The conjugacy class of the Ledrappier subshift contains a subshift that is neither left- nor right-closing to the left.
\end{theorem}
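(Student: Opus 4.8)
The plan is to exhibit an explicit conjugate copy $X'$ of the Ledrappier subshift $X=\{x\in(\mathbb{Z}/2)^{\mathbb{Z}^2}\;:\;x_{(i,j)}+x_{(i+1,j)}+x_{(i,j+1)}=0\ \forall(i,j)\}$ and verify directly that it fails both closing conditions to the left. Recall first that $X$ itself is closing to the left: $\{\bol 0,(1,0),(0,1)\}$ (any two of the three entries determine the third) is an extremally permutive shape, so Lemma~\ref{lem:AllBiclosing} applies and all directions of $X$ are bi-closing; hence the statement is not vacuous. The conceptual obstacle is that the ``obvious'' recodings do not help: columnwise recodings, and $\mathbb{Z}/2$-linear conjugacies (which for an algebraic cyclic subshift like $X$ are merely shifts, the relevant units being monomials), all preserve the column information structure and hence directional closing. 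What is needed is a \emph{nonlinear} gadget coupling column $0$ to column $-1$ \emph{conditionally on column $0$}, exploiting that reading $X$ one column to the left is $2$-to-$1$.

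Concretely, I would use the sliding block code $\phi$ with window $\{(0,0),(-1,0)\}$ given by
\[ \phi(x)_{(i,j)}=\bigl(\,x_{(i,j)},\ x_{(i,j)}\cdot x_{(i-1,j)}\,\bigr)\in(\mathbb{Z}/2)^2, \]
the product being the Boolean AND. Since $x_{(i,j)}$ is recovered as the first coordinate, $\phi$ is injective on the compact set $X$, hence a topological conjugacy onto the subshift $X':=\phi(X)$; this is the claimed member of the conjugacy class.

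To refute closing to the left I would produce a single pair of points of $X'$ that witnesses the failure on both sides simultaneously. Choose $x\in X$ whose $0$th column is the indicator of the origin, $x_{(0,j)}=1\iff j=0$ (every sequence occurs as a column of some point of $X$, since columns to the right are determined and columns to the left are preimages under a surjection). Let $z\in X$ be a point with $z=0$ on all columns $\ge 0$ and $z=\bol 1$ on column $-1$ — such $z$ exists because the constraint forces column $0$ of $z$ to be $\bol 1+\sigma\bol 1=\bol 0$ for the vertical shift $\sigma$ — and set $x'=x+z\in X$. Then $x=x'$ on all columns $\ge 0$, while $x$ and $x'$ differ by $\bol 1$ on column $-1$. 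Pushing through $\phi$: on every column $\ge 1$ we get $\phi(x)=\phi(x')$; on column $0$ the first coordinates agree, and the second coordinates of $\phi(x)_{(0,j)}$ and $\phi(x')_{(0,j)}$ differ precisely when $x_{(0,j)}\cdot x_{(-1,j)}\ne x_{(0,j)}\cdot(x_{(-1,j)}+1)$, i.e. exactly when $x_{(0,j)}=1$ — namely only at $(0,0)$, where the two values are $\bigl(1,x_{(-1,0)}\bigr)$ and $\bigl(1,x_{(-1,0)}+1\bigr)$. So $\phi(x)$ and $\phi(x')$ agree on all columns $\ge 0$ except at the single cell $(0,0)$.

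Finally I would read off both failures. For $\bol v=(-1,0)$ the rotation $R$ sends $(0,1)$ to $(-1,0)$, so $R(H)\cap\mathbb{Z}^2=\{(i,j):i\ge 1\}\cup\{(0,j):j\le -1\}$ while $\overline{R(H)}\cap\mathbb{Z}^2=\{(i,j):i\ge 0\}$; the points $\phi(x),\phi(x')$ agree on $R(H)\cap\mathbb{Z}^2$ but differ at $(0,0)\in\overline{R(H)}$, so $X'$ is not right-closing to the left. The ``left'' half-plane has $\mathbb{Z}^2$-trace $\{(i,j):i\ge 1\}\cup\{(0,j):j\ge 1\}$ with the same closure, and $\phi(x),\phi(x')$ agree on it as well (their only disagreement over $\{i\ge 0\}$ sits at $(0,0)$, which is on the boundary ray in both conventions), so $X'$ is not left-closing to the left either. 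The only point needing care is confirming that the discrepancy between $\phi(x)$ and $\phi(x')$ really is confined to column $-1$ and the cell $(0,0)$; this is where the rigidity of $X$ enters — column $0$ determines columns $\ge 0$, and the leftward ambiguity of a Ledrappier point is exactly translation by $\bol 1$ on column $-1$ — together with the masking property of the AND gadget, that $x_{(0,j)}=0$ erases the dependence on $x_{(-1,j)}$. I would also double-check that the conventions for $H$ and its ``symmetric'' counterpart defining left-closing agree with the paper's.
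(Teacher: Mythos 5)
Your proof is correct and follows the same strategy as the paper's: conjugate the Ledrappier subshift by a nonlinear, row-wise, two-cell-window sliding block code that masks the information in column $-1$ behind the zeros of column $0$, then exhibit an explicit pair of points agreeing on the relevant half-plane lattice points but not on the closure. The paper's gadget is $g_{\mathrm{loc}}(a,b)=(a,0)$ or $(0,1)$ according to $b$ (a ``skewed'' two-block code, verified by pictures of two configurations), while your gadget $\phi(x)_{(i,j)}=(x_{(i,j)},\,x_{(i,j)}\cdot x_{(i-1,j)})$ with the algebraic pair $x$ and $x+z$ is an equally valid and arguably more transparent instance of the same idea, with the added economy that a single pair witnesses the failure of both left- and right-closing.
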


We only conjugate one direction to be non-closing, but readers familiar with marker techniques will have no trouble extending this to show that all three non-deterministic directions can be simultaneously conjugated to be non-closing (in both directions).

In a recent paper \cite{FrKr19}, Franks and Kra study subshifts admitting an extremally permutive shape. In their terminology, such subshifts are called \emph{polygonal}. They ask in \cite[Question~1]{FrKr19} whether polygonal subshifts are closed under conjugacy. Our example above solves the question in the negative.


\begin{theorem}
\label{thm:NotPolygonal}
The conjugacy class of the Ledrappier subshift contains a non-polygonal subshift.
\end{theorem}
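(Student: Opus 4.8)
The plan is to obtain this as an immediate consequence of Theorem~\ref{thm:NondeterministicsNonclosing} and the contrapositive of Lemma~\ref{lem:AllBiclosing}. Let $Y$ be the subshift produced by Theorem~\ref{thm:NondeterministicsNonclosing}: it lies in the conjugacy class of the Ledrappier subshift and is neither left- nor right-closing to the left. By the definition of ``closing in a direction'' (being either left- or right-closing there), this says precisely that $Y$ is \emph{not} closing in the direction $\bol v$ represented by $(-1,0)$, and a fortiori $Y$ is not bi-closing in that direction. So it is false that all directions of $Y$ are bi-closing.

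Next I would invoke Lemma~\ref{lem:AllBiclosing} in contrapositive form: since some direction of $Y$ fails to be bi-closing, $Y$ cannot admit an extremally permutive shape; that is, $Y$ is not polygonal in the sense of Franks--Kra. As $Y$ is conjugate to the Ledrappier subshift, the conjugacy class of the latter contains a non-polygonal subshift, which is the claim.

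There is no real obstacle once Theorem~\ref{thm:NondeterministicsNonclosing} is in hand; the only point worth flagging is that the argument uses only the easy direction of Lemma~\ref{lem:AllBiclosing} (polygonal $\Rightarrow$ all directions bi-closing), not the converse of \cite{GuKaZi15,FrKr19}. It is also worth remarking, to make the failure of conjugacy-invariance vivid, that the Ledrappier subshift itself \emph{is} polygonal: the triangle $\{(0,0),(1,0),(0,1)\}$ is extremally permutive for it, because each vertex value is an affine function of the other two via the defining relation over $\Z/2\Z$. Thus Theorems~\ref{thm:NondeterministicsNonclosing} and~\ref{thm:NotPolygonal} together exhibit one conjugacy class containing both a polygonal and a non-polygonal subshift, answering \cite[Question~1]{FrKr19} negatively.
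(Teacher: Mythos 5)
Your proposal is correct and is essentially identical to the paper's own proof: apply the contrapositive of Lemma~\ref{lem:AllBiclosing} to the subshift $Y$ from Theorem~\ref{thm:NondeterministicsNonclosing}, which fails to be bi-closing in the left direction. The extra remark that the Ledrappier subshift itself is polygonal via the triangle $\{(0,0),(1,0),(0,1)\}$ is accurate and a nice touch, though not needed for the statement.
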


\section{Proofs}

\begin{proof}[Proof of Theorem~\ref{thm:NondeterministicsNonclosing}]
Write $\Z_2$ for the ring $\Z/2\Z$. There is a standard way to see $\Z_2^{\Z^2}$ as a $\Z_2[\varx, \varx^{-1}, \vary, \vary^{-1}]$-module, and the \emph{Ledrappier subshift} $X \subset \Z_2^{\Z^2}$ is the variety $\{x \;|\; px = 0\}$ where $p = 1 + \varx + \vary$.

We fix the geometric convention that $X$ is the subshift where looking through each window of shape $\begin{tikzpicture}[baseline = 0, scale = 0.3] \draw (0,0) rectangle (1,1); \draw (1,0) rectangle (2,1); \draw (0,1) rectangle (1,2); \end{tikzpicture}$ you see an even number of $1$s.

We add another layer of information that implements the possibility of slight skewing of the location of bits, and show that the left direction can be made non-closing both left and right. 

We define another subshift $Y \subset (\Z_2^2)^{\Z^2}$ where we allow either one or two bits per cell. We will define this as the image of $X$ under an injective morphism $f : \Z_2^{\Z^2} \to (\Z_2^2)^{\Z^2}$, so in particular it conjugates $X$ onto $Y$.

First define a one-dimensional injective morphism $g : \Z_2^\Z \to (\Z_2^2)^\Z$ by $g(x)_i = g_{\mathrm{loc}}(x_i, x_{i+1})$ where 
\[ g_{\mathrm{loc}}(a,b) = \left\{ \begin{array}{ll}
(a, 0) & \mbox{if } b = 0 \\
(0, 1) & \mbox{if } b = 1 \\
\end{array} \right. \]
This is just the $2$-blocking representation \cite{LiMa95} followed by the symbolwise projection that maps $(1,1)$ to $(0,1)$ and fixes other symbols. A left inverse is obtained by projecting to the second coordinate and shifting, so $g$ is indeed injective.

We define $f$ by applying $g$ on every row, or in formulas $f(x)_{\vec v} = g(x_{\vec v_2})_{\vec v_1}$ where for $x \in \Z_2^{\Z^2}$ we write $x_i \in \Z_2^\Z$ for row extraction, i.e. $(x_i)_j = x_{(j,i)}$.

Now consider the (obvious infinite extensions of the) following configurations

\begin{center}
\begin{tikzpicture}[baseline = 0, scale = 0.5]
\foreach \x/\y in { 1/13, 3/13, 5/13, 0/12, 1/12, 4/12, 5/12, 1/11, 5/11, 2/10, 3/10, 4/10, 5/10, 3/9, 5/9, 4/8, 5/8, 5/7
}{\draw[fill, gray] (\x,\y) rectangle (\x+1,\y+1);}
\draw (0,0) grid (10,14);
\draw[very thick] (5,0) -- (5,14);
\draw[very thick] (0,7) -- (10,7);
\end{tikzpicture} \;\;\;
\begin{tikzpicture}[baseline = 0, scale = 0.5]
\foreach \x/\y in { 0/13, 1/13, 2/13, 3/13, 4/13, 5/13, 1/12, 3/12, 5/12, 0/11, 1/11, 4/11, 5/11, 1/10, 5/10, 2/9, 3/9, 4/9, 5/9, 3/8, 5/8, 4/7, 5/7, 0/6, 1/6, 2/6, 3/6, 4/6, 0/5, 2/5, 4/5, 0/4, 3/4, 4/4, 0/3, 4/3, 1/2, 2/2, 3/2, 4/2, 2/1, 4/1, 3/0, 4/0
}{\draw[fill, gray] (\x,\y) rectangle (\x+1,\y+1);}
\draw (0,0) grid (10,14);
\draw[very thick] (5,0) -- (5,14);
\draw[very thick] (0,7) -- (10,7);
\end{tikzpicture}
\end{center}

\noindent where the color gray denotes $1$. The conjugate images in $Y$ are (respectively)

\begin{center}
\begin{tikzpicture}[baseline = 0, scale = 0.5]
\foreach \x/\y in { 1/13, 3/13, 5/13, 1/12, 5/12, 1/11, 5/11, 5/10, 3/9, 5/9, 5/8, 5/7}{\draw[fill, gray] (\x,\y) rectangle (\x+0.5,\y+1);}
\foreach \x/\y in { 0/13, 2/13, 4/13, 0/12, 3/12, 4/12, 0/11, 4/11, 1/10, 2/10, 3/10, 4/10, 2/9, 4/9, 3/8, 4/8, 4/7}{\draw[fill, gray] (\x+0.5,\y) rectangle (\x+1,\y+1);}
\draw (0,0) grid (10,14);
\draw[very thick] (5,0) -- (5,14);
\draw[very thick] (0,7) -- (10,7);
\end{tikzpicture} \;\;\;
\begin{tikzpicture}[baseline = 0, scale = 0.5]
\foreach \x/\y in { 5/13, 1/12, 3/12, 5/12, 1/11, 5/11, 1/10, 5/10, 5/9, 3/8, 5/8, 5/7, 4/6, 0/5, 2/5, 4/5, 0/4, 4/4, 0/3, 4/3, 4/2, 2/1, 4/1, 4/0}{\draw[fill, gray] (\x,\y) rectangle (\x+0.5,\y+1);}
\foreach \x/\y in { 0/13, 1/13, 2/13, 3/13, 4/13, 0/12, 2/12, 4/12, 0/11, 3/11, 4/11, 0/10, 4/10, 1/9, 2/9, 3/9, 4/9, 2/8, 4/8, 3/7, 4/7, 0/6, 1/6, 2/6, 3/6, 1/5, 3/5, 2/4, 3/4, 3/3, 0/2, 1/2, 2/2, 3/2, 1/1, 3/1, 2/0, 3/0}{\draw[fill, gray] (\x+0.5,\y) rectangle (\x+1,\y+1);}
\draw (0,0) grid (10,14);
\draw[very thick] (5,0) -- (5,14);
\draw[very thick] (0,7) -- (10,7);
\end{tikzpicture}
\end{center}
where we color the left and right half of each cell gray or white depending on the left- and rightmost bit, respectively, again gray means 1. The picture plainly shows that $Y$ is not left-closing to the left. Similarly one can show it is not right-closing to the left. 
\end{proof}

\begin{proof}[Proof of Theorem~\ref{thm:NotPolygonal}]
By Lemma~\ref{lem:AllBiclosing}, a polygonal subshift is bi-closing in all directions. Thus, the subshift $Y$ constructed in the previous proof is not polygonal.
\end{proof}

\bibliographystyle{plain}
\bibliography{../../../bib/bib}{}

\end{document}